\newtheorem{thm}{Theorem}[section]
\newtheorem{cor}[thm]{Corollary}
\newtheorem{lem}[thm]{Lemma}
\newtheorem{defn}[thm]{Definition}
\numberwithin{equation}{section}
\begin{document}
\begin{center}
\LARGE
Some Interesting Properties of the Riemann Zeta Function
\end{center}

\begin{center}
\Large
Johar M. Ashfaque
\end{center}
\tableofcontents
\section{Introduction}
Leonhard Euler lived from 1707 to 1783 and is, without a doubt, one of the most influential mathematicians of all time.
His work covered many areas of mathematics including algebra, trigonometry, graph theory, mechanics and, most relevantly, analysis.
\par Although Euler demonstrated an obvious genius when he was a child it took until 1735 for his talents to be fully recognised.
It was then that he solved what was known as the Basel problem, the problem set in 1644 and named after Euler's home town
\cite{Prime}. This problem asks for an exact expression of the limit of the equation
\begin{equation} \label{zeta2}
\sum^\infty_{n=1}\frac{1}{n^2}=1+\frac{1}{4}+\frac{1}{9}+\frac{1}{16}+...,
\end{equation}
which Euler calculated to be exactly equal to $\pi^2/6$. Going beyond this, he also calculated that
\begin{equation} \label{zeta4}
\sum^\infty_{n=1}\frac{1}{n^4}=1+\frac{1}{16}+\frac{1}{81}+\frac{1}{256}+...=\frac{\pi^4}{90},
\end{equation}
among other specific values of a series that later became known as the Riemann zeta function, which is classically defined in the following way.
\begin{defn}\label{zeta} For $\Re(s)>1$, the Riemann zeta function is defined as
$$\zeta(s)=\sum^\infty_{n=0}\frac{1}{n^s}=1+\frac{1}{2^s}+\frac{1}{3^s}+\frac{1}{4^s}+...$$
\end{defn}
\noindent This allows us to write the sums in the above equations (\ref{zeta2}) and (\ref{zeta4}) simply as $\zeta(2)$ and $\zeta(4)$ respectively.
A few years later Euler constructed a general proof that gave exact values for all $\zeta(2n)$ for $n\in\mathbb{N}$. These were the first instances
of the special values of the zeta function and are still amongst the most interesting. However, when they were discovered, it was still unfortunately
the case that analysis of $\zeta(s)$ was restricted only to the real numbers.
It wasn't until the work of Bernhard Riemann that the zeta function was to be fully extended to all of the complex numbers by the process of
analytic continuation and it is for this reason that the function is commonly referred to as the Riemann zeta function. From this, we are able to
calculate more special values of the zeta function and understand its nature more completely.
\par We will be discussing some classical values of the zeta function as well as some more modern ones and will require no more than an
undergraduate understanding of analysis (especially Taylor series of common functions) and complex numbers. The only tool that we will be using
extensively in addition to these will be the `Big O' notation, that we shall now define.
\begin{defn} \label{bigoh}
We can say that $f(x)=g(x)+O(h(x))$ as $x \rightarrow k$ if there exists a constant $C> 0$ such that $|f(x)-g(x)|\leq C| h(x)|$ for when $x$ is close enough to $k$.
\end{defn}
\noindent This may seem a little alien at first and it is true that, to the uninitiated, it can take a little while to digest.
However, its use is more simple than its definition would suggest and so we will move on to more important matters.

\section{The Euler Product Formula for $\zeta(s)$}
We now define the convergence criteria for infinite products.
\begin{thm}\label{prodcon}
If $\sum|a_n|$ converges then $\prod(1+a_n)$ converges.
\end{thm}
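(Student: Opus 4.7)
The plan is to show that the partial products $P_N = \prod_{n=1}^N (1+a_n)$ form a Cauchy sequence in $\mathbb{C}$, and therefore converge. The whole argument will rest on the elementary real-variable inequality $1+x \leq e^x$ for $x \geq 0$, which turns products of $(1+|a_n|)$ into manageable exponentials of partial sums of $\sum |a_n|$.

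First I would establish a uniform upper bound on the partial products. Using $|1+a_n| \leq 1+|a_n| \leq e^{|a_n|}$ term by term, telescoping gives
\[
|P_N| \;\leq\; \prod_{n=1}^N (1+|a_n|) \;\leq\; \exp\!\Bigl(\sum_{n=1}^N |a_n|\Bigr) \;\leq\; e^{S},
\]
where $S = \sum_{n=1}^\infty |a_n| < \infty$ by hypothesis. This bound on $|P_N|$ is what will let me upgrade smallness of tails into smallness of differences $P_M - P_N$.

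Next, for $M > N$ I would factor
\[
P_M - P_N \;=\; P_N\Bigl(\prod_{n=N+1}^M (1+a_n) - 1\Bigr),
\]
and bound the bracketed factor by expanding the product and applying the triangle inequality to conclude
\[
\Bigl|\prod_{n=N+1}^M (1+a_n) - 1\Bigr| \;\leq\; \prod_{n=N+1}^M (1+|a_n|) - 1 \;\leq\; \exp\!\Bigl(\sum_{n=N+1}^M |a_n|\Bigr) - 1.
\]
Since $\sum|a_n|$ converges, the tail $\sum_{n=N+1}^M |a_n|$ can be made arbitrarily small by choosing $N$ large, and by continuity of $\exp$ at $0$ the right-hand side tends to $0$. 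Combining this with the uniform bound $|P_N| \leq e^S$ shows $|P_M - P_N| \to 0$, so $(P_N)$ is Cauchy and hence convergent.

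The main obstacle, conceptually, is the step that turns the additive statement $\sum |a_n| < \infty$ into a multiplicative statement about $\prod(1+a_n)$; this is precisely where the inequality $1+x \leq e^x$ earns its keep, by providing the bridge between sums and products. A secondary subtlety is that one typically insists a convergent infinite product have a \emph{nonzero} limit to exclude degenerate cases like $a_n = -1$; a careful write-up would note that the estimates above also prevent the limit from being zero when one additionally assumes $|a_n| < 1$ eventually, but for the statement as given the Cauchy argument above is the essential content.
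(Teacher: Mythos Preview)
Your argument is correct. It differs from the paper's own treatment, which does not give a full proof but sketches the classical logarithmic reduction: writing $\prod_{n=1}^m (1+a_n) = \exp\bigl(\sum_{n=1}^m \ln(1+a_n)\bigr)$ and observing that $\sum \ln(1+a_n)$ converges absolutely because $|\ln(1+a_n)|$ is comparable to $|a_n|$ for small $a_n$. Your route avoids the logarithm entirely and works directly with the partial products via a Cauchy-sequence estimate, using $1+x \leq e^x$ as the sole bridge between sums and products. The advantage of your approach is that it sidesteps any discussion of the complex logarithm and its branches, which the paper's sketch would need to address to be made rigorous for complex $a_n$; the advantage of the paper's approach is that it makes the heuristic ``products $\leftrightarrow$ sums via $\exp/\log$'' explicit and immediately explains why one also obtains a \emph{nonzero} limit (since $\exp$ never vanishes), a point you correctly flag as needing separate attention in your version.
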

\noindent Although we will not give a complete proof here, one is referred to \cite{Whit}. However, it is worth noting that
$$\prod^m_{n=1}(1+a_n)=\exp\left\{\sum^m_{n=1}\ln(1+a_n)\right\}.$$
From this it can be seen that, for the product to converge, it is sufficient that the summation on the right hand side converges.
The summation $\sum\ln(1+a_n)$ converges absolutely if $\sum|a_n|$ converges. This is the conceptual idea that the proof is rooted in.
\par Now that we have this tool, we can prove the famous Euler Product formula for $\zeta(s).$ This relation makes use of the Fundamental Theorem of Arithmetic which says that every integer can be expressed as a unique product of primes. We will not prove it here but the interested reader can consult \cite{Chahal} for a proof.
\begin{thm}\label{eulerprod}
Let $p$ denote the prime numbers. For $\Re(s)>1,$
$$\zeta(s)=\prod_{p}^\infty(1-p^{-s})^{-1}.$$
\end{thm}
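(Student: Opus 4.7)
The plan is to proceed by the classical ``sieve'' approach, systematically removing from $\zeta(s)$ the contributions of integers divisible by each prime in turn. First I would invoke Theorem \ref{prodcon} to verify that the product $\prod_p(1-p^{-s})^{-1}$ actually converges for $\Re(s)>1$: writing $(1-p^{-s})^{-1}=1+a_p$ with $a_p=p^{-s}/(1-p^{-s})$, I would observe that for $\Re(s)=\sigma>1$ one has $|a_p|\le p^{-\sigma}/(1-2^{-\sigma})$, so $\sum_p|a_p|$ is dominated by a constant multiple of $\sum_n n^{-\sigma}$, which converges by Definition \ref{zeta}. This secures the right-hand side as a legitimate convergent object with which to equate the series.

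For the main identity, I would start from Definition \ref{zeta} and multiply both sides by $(1-2^{-s})$. Distributing and re-indexing, the subtracted sum $2^{-s}\sum_n n^{-s}=\sum_n(2n)^{-s}$ cancels exactly the even-indexed terms of $\zeta(s)$, yielding
$$(1-2^{-s})\zeta(s)=\sum_{2\nmid n}\frac{1}{n^s}.$$
Iterating this step, multiplying successively by $(1-p^{-s})$ for each prime $p\le N$, the right-hand side becomes the sum of $n^{-s}$ over those positive integers $n$ with no prime divisor $\le N$. By the Fundamental Theorem of Arithmetic the only positive integer whose every prime factor exceeds every prime whatsoever is $1$, so in the limit only the term $1$ survives, giving
$$\zeta(s)\prod_{p\le N}(1-p^{-s})\longrightarrow 1\quad\text{as }N\to\infty,$$
from which the claimed product formula follows by dividing through.

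The main obstacle will be rigorously justifying the passage to the limit, namely showing that the tail $\sum_{n>N,\ p\mid n\,\Rightarrow\,p>N} n^{-s}$ tends to $0$ as $N\to\infty$. Since $\Re(s)>1$ the full series $\sum n^{-s}$ converges absolutely, so this tail is bounded in modulus by $\sum_{n>N}n^{-\sigma}$, which vanishes; this supplies precisely the control needed to close the argument. A secondary care point is that each individual multiplication step must be legal as an absolutely convergent manipulation of series, which is again guaranteed by absolute convergence of $\zeta(s)$ on the half-plane $\Re(s)>1$.
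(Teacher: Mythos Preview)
Your proposal is correct and follows essentially the same sieve approach as the paper: successively multiply $\zeta(s)$ by the factors $(1-p^{-s})$ to strip out terms divisible by each prime, invoke the Fundamental Theorem of Arithmetic to leave only the term $1$, and appeal to Theorem~\ref{prodcon} together with convergence of $\sum n^{-\sigma}$ for the product's convergence. If anything you are more careful than the paper, which does not explicitly bound the tail $\sum_{n>N} n^{-\sigma}$ to justify the limit.
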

\begin{proof} Observe that
$$\frac{\zeta(s)}{2^s}=\frac{1}{2^s}+\frac{1}{4^s}+\frac{1}{6^s}+\frac{1}{8^s}+\frac{1}{10^s}+...$$
Then, we can make the subtraction
\begin{align*}
\zeta(s)\left(1-\frac{1}{2^s}\right)&=1+\frac{1}{2^s}+\frac{1}{3^s}+\frac{1}{4^s}+\frac{1}{5^s}+...\\
&-\frac{1}{2^s}+\frac{1}{4^s}+\frac{1}{6^s}+\frac{1}{8^s}+\frac{1}{10^s}+...\\
&=1+\frac{1}{3^s}+\frac{1}{5^s}+\frac{1}{7^s}+\frac{1}{9^s}+...
\end{align*}
Clearly, this has just removed any terms from $\zeta(s)$ that have a factor of $2^{-s}.$ We can then take this a step further to see that
$$\zeta(s)\left(1-\frac{1}{2^s}\right)\left(1-\frac{1}{3^s}\right)=1+\frac{1}{5^s}+\frac{1}{7^s}+\frac{1}{11^s}+\frac{1}{13^s}+...$$
If we continue this process of siphoning off primes we can see that, by the Fundamental Theorem of Arithmetic,
$$\zeta(s)\prod_p(1-p^{-s})=1,$$
which requires only a simple rearrangement to see that
$$\zeta(s)=\prod_p(1-p^{-s})^{-1}.$$
Note that, for $\Re(s)>1$ this converges because
$$\sum_p p^{-\Re(s)}<\sum_n n^{-\Re(s)},$$
which converges. This completes the proof
\end{proof}

\section{The Bernoulli Numbers}
We will now move on to the study of Bernoulli numbers, a sequence of rational numbers that pop up frequently when considering the zeta function. We are interested in them because they are intimately related to some special values of the zeta function and are present in some rather remarkable identities.
\par We already have an understanding of Taylor series and the analytic power that they provide and so we can now begin with the definition of the Bernoulli numbers. This section will follow Chapter 6 in \cite{Stopple}.
\begin{defn} \label{bern}
The Bernoulli Numbers $B_n$ are defined to be the coefficients in the series expansion
$$\frac{x}{e^x-1}=\sum_{n=0}^\infty\frac{B_n x^n}{n!}.$$
\end{defn}
\noindent It is a result from complex analysis that this series converges for $|x|<2\pi$ but, other than this, we cannot gain much of an understanding from the implicit definition. Please note also, that although the left hand side would appear to become infinite at $x=0$, it does not.
\begin{cor} \label{bernlaw}
We can calculate the Bernoulli numbers by the recursion formula
$$0=\sum_{j=0}^{k-1}{k \choose j}B_j,$$
where $B_0=1$.
\end{cor}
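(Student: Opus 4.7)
The plan is to unwind the generating function definition and match coefficients. Starting from Definition \ref{bern}, I would multiply both sides by $e^x-1$ to obtain
$$x = (e^x - 1)\sum_{n=0}^\infty \frac{B_n x^n}{n!},$$
and then expand $e^x - 1 = \sum_{m=1}^\infty \frac{x^m}{m!}$. Since both series converge absolutely for $|x| < 2\pi$, forming the Cauchy product is legitimate.

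Next I would collect the coefficient of $x^k$ on the right-hand side. Grouping pairs $(m,n)$ with $m+n=k$, $m\geq 1$, $n\geq 0$, and substituting $j=n=k-m$, the coefficient of $x^k$ becomes
$$\sum_{j=0}^{k-1} \frac{B_j}{j!\,(k-j)!}.$$
The left-hand side $x$ has coefficient $1$ at $k=1$ and $0$ for $k\geq 2$. Setting $k=1$ recovers $B_0 = 1$, as asserted. For $k\geq 2$ I get
$$0 = \sum_{j=0}^{k-1} \frac{B_j}{j!\,(k-j)!},$$
and multiplying through by $k!$ yields
$$0 = \sum_{j=0}^{k-1} \binom{k}{j} B_j,$$
which is exactly the stated recursion.

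The main obstacle is purely notational rather than conceptual: one has to be careful to keep the lower summation index at $m=1$ (not $m=0$) when expanding $e^x-1$, since this is precisely what makes the recursion end at $j=k-1$ rather than $j=k$, and is what allows the recursion to solve for $B_{k-1}$ in terms of $B_0,\dots,B_{k-2}$. Once the indexing is handled correctly, the proof is a direct coefficient comparison justified by the uniqueness of power series expansions on the disk $|x|<2\pi$.
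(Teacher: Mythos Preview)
Your proof is correct and follows essentially the same route as the paper's: multiply the defining relation through by $e^x-1$, expand $e^x-1$ as its Taylor series starting at $m=1$, form the Cauchy product, and compare the coefficient of $x^k$ to obtain $\sum_{j=0}^{k-1}\frac{B_j}{j!(k-j)!}=0$ for $k\geq 2$, then multiply by $k!$. If anything, you are slightly more careful than the paper in justifying the product via absolute convergence and in recovering $B_0=1$ from the $k=1$ coefficient rather than simply asserting it.
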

\begin{proof}
Let us first replace $e^x-1$ with its Taylor series to see that
$$x=\left(\left(\sum_{j=0}^\infty\frac{x^j}{j}\right)-1\right)\sum_{n=0}^\infty \frac{B_n x^n}{n!}=\sum_{j=1}^\infty\frac{x^j}{j!}\sum_{n=0}^\infty \frac{B_n x^n}{n!}.$$
If we compare coefficients of powers of $x$ we can clearly see that, except for $x^1$,
$$x^k: \hspace{2mm}0=\frac{B_0}{k!}+\frac{B_1}{(k-1)!}+\frac{B_2}{2!(k-2)!}+...+\frac{B_{k-2}}{2!(k-2)!}+\frac{B_{k-1}}{(k-1)!}.$$
Hence
$$0=\sum^{k-1}_{j=0}\frac{B_j}{(k-j)!j!}=\frac{1}{k!}\sum_{j=0}^{k-1}{k \choose j}B_j.$$
Note that the inverse $k!$ term is irrelevant to the recursion formula. This completes the proof.
\end{proof}
\noindent The first few Bernoulli numbers are therefore
$$B_0=1,\hspace{2mm} B_1=-1/2,\hspace{2mm} B_2=1/6,\hspace{2mm} B_3=0,$$
$$B_4=-1/30, \hspace{2mm} B_5=0, \hspace{2mm} B_6=1/42, \hspace{2mm} B_7=0.$$

\begin{lem} \label{oddbern}
The values of the odd Bernoulli numbers (except $B_1$) are zero
\end{lem}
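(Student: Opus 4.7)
The plan is to show that the auxiliary function
$$f(x) := \frac{x}{e^x-1} + \frac{x}{2}$$
is an even function of $x$, from which the vanishing of $B_n$ for odd $n\geq 3$ follows immediately by comparing coefficients.

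First I would use Definition \ref{bern} and the known value $B_1 = -1/2$ to expand $f$ as a power series:
$$f(x) = \sum_{n=0}^\infty \frac{B_n x^n}{n!} + \frac{x}{2} = 1 + \sum_{n=2}^\infty \frac{B_n x^n}{n!}.$$
Thus the coefficient of $x^1$ has been cancelled out, and proving evenness of $f$ will force every remaining odd coefficient $B_n/n!$ (for odd $n\geq 3$) to vanish.

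Next I would verify the evenness by a direct manipulation. Starting from
$$f(-x) = \frac{-x}{e^{-x}-1} - \frac{x}{2},$$
I would multiply the first fraction by $e^x/e^x$ to obtain $\frac{x e^x}{e^x-1}$, and then compute
$$f(-x) - f(x) = \frac{x e^x}{e^x-1} - \frac{x}{e^x-1} - x = \frac{x(e^x-1)}{e^x-1} - x = 0.$$
Hence $f(-x)=f(x)$, so in the expansion $f(x) = 1 + \sum_{n\geq 2} B_n x^n/n!$ every odd-indexed coefficient must be $0$, giving $B_n = 0$ for all odd $n \geq 3$ as claimed.

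There is no real obstacle here; the only subtle point is the small algebraic trick of adding $x/2$ to symmetrize the generating function, motivated by the observation that $B_1=-1/2$ is the unique nonzero odd Bernoulli number and must be absorbed before any symmetry argument can succeed. Once that trick is spotted, the rest is a one-line identity and a coefficient comparison justified by the uniqueness of the power series expansion (valid for $|x|<2\pi$, as noted after Definition \ref{bern}).
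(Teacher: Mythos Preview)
Your proof is correct and follows essentially the same approach as the paper: both add $x/2$ to the generating function and then show the resulting function is even, forcing the odd coefficients for $n\geq 2$ to vanish. The only cosmetic difference is in how evenness is verified---the paper rewrites $f(x)$ in the manifestly symmetric form $\tfrac{x}{2}\cdot\tfrac{e^{x/2}+e^{-x/2}}{e^{x/2}-e^{-x/2}}$, whereas you compute $f(-x)-f(x)=0$ directly; both are equally valid and equally short.
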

\begin{proof}  As we know the values of $B_0$ and $B_1$, we can remove the first two terms from Definition \ref{bern} and rearrange to get
$$\frac{x}{e^x-1}+\frac{x}{2}=1+\sum^\infty_{n=2}\frac{B_n x^n}{n!},$$
which then simplifies to give
$$\frac{x}{2}\left(\frac{e^x+1}{e^x-1}\right)=1+\sum^\infty_{n=2}\frac{B_n x^n}{n!}.$$
We can then multiply both the numerator and denominator of the left hand side by $\exp(-x/2)$ to get
\begin{equation} \label{berndef2}
\frac{x}{2}\left(\frac{e^{x/2}+e^{-x/2}}{e^{x/2}-e^{-x/2}}\right)=1+\sum^\infty_{n=2}\frac{B_n x^n}{n!}.
\end{equation}
\noindent By substituting $x\rightarrow-x$ into the left hand side of this equation we can see that it is an even function and hence
invariate under this transformation. Hence, as the odd Bernoulli numbers multiply odd powers of $x$, the right hand side can only be invariant
under the same transformation if the value of the odd coefficients are all zero.
\end{proof}

\subsection{Relationship to the Zeta Function}
As, we have already dicussed, Euler found a way of calculating exact values of $\zeta(2n)$ for $n\in\mathbb{N}.$ He did this using the properties of the Bernoulli numbers, although he originally did it using the infinite product for the sine function. The relationship between the zeta function and Bernoulli numbers is not obvious but the proof of it is quite satisfying.
\begin{thm} \label{zetabern}
For $n\in\mathbb{N}$,
$$\zeta(2n)=(-1)^{n-1}\frac{(2\pi)^{2n}B_{2n}}{2(2n)!}.$$
\end{thm}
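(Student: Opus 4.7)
The plan is to derive two distinct power series expansions of $\pi x \cot(\pi x)$ about $x=0$ and then compare coefficients of $x^{2n}$. One expansion will arise from the Bernoulli generating function already obtained in the proof of Lemma \ref{oddbern}, and the other from Euler's infinite product formula for the sine function.

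First I would take equation (\ref{berndef2}), which after applying Lemma \ref{oddbern} can be written as
$$\frac{x}{2}\coth\!\left(\frac{x}{2}\right) = 1 + \sum_{n=1}^\infty \frac{B_{2n}}{(2n)!}\,x^{2n},$$
and substitute $x \mapsto 2ix$. Using the identities $\cosh(ix)=\cos x$ and $\sinh(ix)=i\sin x$, the left-hand side collapses to $x\cot x$, while the right-hand side acquires a factor of $(2i)^{2n} = (-1)^n 2^{2n}$ in the $n$th term. Replacing $x$ by $\pi x$ then yields
$$\pi x\cot(\pi x) = 1 + \sum_{n=1}^\infty \frac{(-1)^n (2\pi)^{2n} B_{2n}}{(2n)!}\,x^{2n}.$$

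Next I would invoke Euler's product $\sin(\pi x) = \pi x\prod_{n=1}^\infty(1 - x^2/n^2)$ and take its logarithmic derivative, obtaining
$$\pi\cot(\pi x) = \frac{1}{x} - 2\sum_{n=1}^\infty \frac{x/n^2}{1 - x^2/n^2}.$$
Multiplying by $x$, expanding each summand as a geometric series in $x^2/n^2$ (valid for $|x|<1$), and swapping the order of summation (justified by absolute convergence in a small disc) groups the terms by powers of $x$ and produces
$$\pi x\cot(\pi x) = 1 - 2\sum_{k=1}^\infty \zeta(2k)\,x^{2k}.$$
Equating the coefficient of $x^{2n}$ in the two expressions gives
$$-2\zeta(2n) = \frac{(-1)^n (2\pi)^{2n} B_{2n}}{(2n)!},$$
and a trivial rearrangement (using $(-1)^{n+1} = (-1)^{n-1}$) yields the stated formula.

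The main obstacle is the Euler product for $\sin(\pi x)$, which must be imported as a black-box result; its proof (via Weierstrass factorization, or via the Herglotz trick applied directly to $\pi\cot(\pi x)$) is a nontrivial piece of complex analysis that this elementary exposition sidesteps. Once both series representations of $\pi x\cot(\pi x)$ are in hand, the substitution $x\mapsto 2ix$ and the coefficient-matching step are purely mechanical, and the legitimacy of the comparison follows from the uniqueness of Taylor coefficients in the common disc of convergence $|x|<1$.
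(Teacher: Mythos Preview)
Your proof is correct and follows the same overall strategy as the paper: obtain two power-series expansions of $z\cot z$ (or equivalently $\pi x\cot(\pi x)$) and equate coefficients. The Bernoulli-number expansion is obtained identically, via the substitution $x\mapsto 2ix$ in equation~(\ref{berndef2}).

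The genuine difference lies in how the second expansion is reached. You import Euler's product for $\sin(\pi x)$ as a black box and take its logarithmic derivative to get the partial-fraction decomposition of $\cot$ in one line. The paper instead proves the partial-fraction formula
\[
z\cot z = 1 - 2\sum_{j=1}^\infty \frac{z^2}{j^2\pi^2 - z^2}
\]
from scratch by an elementary real-analytic argument: it iterates the identity $2\cot(2z)=\cot z + \cot(z+\pi/2)$ to write $\cot z$ as a finite sum over $2^n$ shifted cotangents (Lemma~\ref{bernlem2}), then lets $n\to\infty$ and uses Taylor estimates to pass to the limit (Lemma~\ref{bernlem3}). From there the geometric-series step (Lemma~\ref{bernlem4}) and the final coefficient comparison are the same as yours. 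Your route is considerably shorter but, as you note, defers the real work to the Weierstrass product; the paper's route is self-contained within undergraduate real analysis, which is precisely the point the author advertises (``a real analytic proof which is slightly longer than a complex analytic proof'').
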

\noindent To prove this theorem, we will be using the original proof attributed to Euler and reproduced in \cite{Stopple}. This will be done by finding two seperate expressions for $z\cot(z)$ and then comparing them. We will be using a real analytic proof which is slightly longer than a complex analytic proof, an example of which can be found in \cite{Whit}.

\begin{lem} \label{bernlem1}
The function $z\cot(z)$ has the Taylor expansion
\begin{equation} \label{eqbernlem1}
z\cot(z)=1+\sum_{n=1}^\infty(-4)^n\frac{B_{2n}z^{2n}}{(2n)!}.
\end{equation}
\end{lem}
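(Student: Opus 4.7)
The plan is to leverage equation (\ref{berndef2}), which already provides an essentially identical expansion. First I would apply Lemma \ref{oddbern} to drop the odd-index Bernoulli terms from the sum on the right of (\ref{berndef2}), and simultaneously recognise the left-hand side as $\tfrac{x}{2}\coth(\tfrac{x}{2})$, so that
\[
\frac{x}{2}\coth\!\left(\frac{x}{2}\right) = 1 + \sum_{n=1}^\infty \frac{B_{2n}\, x^{2n}}{(2n)!}.
\]
Everything then reduces to converting hyperbolic cotangent into ordinary cotangent.

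Next I would invoke the identity $\cot(z) = i\coth(iz)$, which follows immediately from Euler's formulas $\cos(z) = \cosh(iz)$ and $\sin(z) = -i\sinh(iz)$. Multiplying by $z$ gives $z\cot(z) = iz\coth(iz)$, and the right-hand side is exactly $\tfrac{x}{2}\coth(\tfrac{x}{2})$ under the substitution $x = 2iz$. Performing this substitution in the display above and using $(2iz)^{2n} = 4^n(i^2)^n z^{2n} = (-4)^n z^{2n}$ delivers the claimed expansion directly.

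No step of the argument is really difficult; the whole proof is a matter of a single substitution plus careful bookkeeping of the powers of $i$. The only genuine point to verify is that the substitution $x = 2iz$ keeps us inside the disk of convergence $|x| < 2\pi$, which corresponds to $|z| < \pi$ — precisely the disk on which $z\cot(z)$ is analytic, since the nearest singularities of $\cot(z)$ to the origin lie at $\pm\pi$. So the main challenge, such as it is, is purely cosmetic: making sure the overall sign and the factor $(-4)^n$ come out exactly as stated rather than, say, $4^n$ or $(-1)^{n-1}4^n$.
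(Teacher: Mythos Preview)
Your proposal is correct and follows essentially the same route as the paper: substitute $x=2iz$ into equation~(\ref{berndef2}), use Lemma~\ref{oddbern} to drop the odd-index terms, and identify the left-hand side with $z\cot(z)$. The paper does this in one line by writing the exponential quotient directly rather than naming it $\coth$, but the substance is identical; your extra remark on the radius of convergence $|z|<\pi$ is a welcome addition.
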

\begin{proof}
Substitute $x=2iz$ into equation (\ref{berndef2}) and observe that, because the odd Bernoulli numbers are zero, we can write this as
$$\frac{x}{2}\left(\frac{e^{x/2}+e^{-x/2}}{e^{x/2}-e^{-x/2}}\right)=iz\frac{e^{iz}+e^{-iz}}{e^{iz}-e^{-iz}}=1+\sum_{n=1}^\infty(-4)^n\frac{B_{2n}z^{2n}}{(2n)!}.$$
Noting that the left hand side is equal to $z\cot(z)$ completes the proof
\end{proof}
\begin{lem} \label{bernlem2}
The function $\cot(z)$ can be written as
\begin{equation} \label{eqbernlem2}
\cot(z)=\frac{\cot(z/2^n)}{2^n}-\frac{\tan(z/2^n)}{2^n}+\frac{1}{2^n}\sum_{j=1}^{2^{n-1}-1}\left(\cot\left(\frac{z+j\pi}{2^n}\right)+\cot\left(\frac{z-j\pi}{2^n}\right)\right).
\end{equation}
\end{lem}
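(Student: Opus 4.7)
I would approach this by induction on $n$, driven by a single trigonometric doubling identity. The engine is the formula
$$\cot(w)=\frac{1}{2}\cot\!\left(\frac{w}{2}\right)-\frac{1}{2}\tan\!\left(\frac{w}{2}\right),$$
which follows immediately from $\sin(w)=2\sin(w/2)\cos(w/2)$ together with $\cos(w)=\cos^{2}(w/2)-\sin^{2}(w/2)$. Using the shift $\cot(\theta+\pi/2)=-\tan(\theta)$, this rewrites as
$$\cot(w)=\frac{1}{2}\cot\!\left(\frac{w}{2}\right)+\frac{1}{2}\cot\!\left(\frac{w+\pi}{2}\right),$$
which is the cleaner form for iterating.

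The cleanest route is to first prove, by induction on $n$, the compact identity
$$\cot(z)=\frac{1}{2^{n}}\sum_{j=0}^{2^{n}-1}\cot\!\left(\frac{z+j\pi}{2^{n}}\right).$$
The base case $n=1$ is the doubling identity above. For the inductive step I would simply apply the doubling identity to each summand $\cot((z+j\pi)/2^{n})$, producing two new terms indexed by $2j$ and $2j+1$ (mod $2^{n+1}$), which assemble into the corresponding sum at level $n+1$.

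The final step is a purely combinatorial regrouping of that symmetric sum into the asymmetric form stated in the lemma. I would single out the two exceptional indices: $j=0$ contributes $\cot(z/2^{n})/2^{n}$, and $j=2^{n-1}$ contributes $\cot(z/2^{n}+\pi/2)/2^{n}=-\tan(z/2^{n})/2^{n}$. For the remaining indices $j=1,\dots,2^{n-1}-1$ and $j=2^{n-1}+1,\dots,2^{n}-1$, I would substitute $j\mapsto 2^{n}-j$ in the latter range and use the period $\pi$ of $\cot$ to write
$$\cot\!\left(\frac{z+(2^{n}-j)\pi}{2^{n}}\right)=\cot\!\left(\frac{z-j\pi}{2^{n}}+\pi\right)=\cot\!\left(\frac{z-j\pi}{2^{n}}\right),$$
so that these terms pair with the $j=1,\dots,2^{n-1}-1$ terms to produce exactly the symmetric sum $\cot((z+j\pi)/2^{n})+\cot((z-j\pi)/2^{n})$ appearing in \eqref{eqbernlem2}.

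The only nontrivial bookkeeping is verifying that the two pieces of the inductive step really interlock to give the $n+1$ sum (ensuring that indices $2j$ and $2j+1$ as $j$ runs over $0,\dots,2^{n}-1$ partition $\{0,\dots,2^{n+1}-1\}$), and, at the end, checking the edge case $n=1$, where the symmetric sum is empty ($2^{n-1}-1=0$) and the identity collapses correctly to the base doubling formula. Everything else is just tracking $\cot/\tan$ reflections.
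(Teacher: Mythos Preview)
Your proposal is correct and follows essentially the same route as the paper: start from the doubling identity $2\cot(2z)=\cot(z)+\cot(z+\pi/2)$, iterate by induction to the compact sum $\cot(z)=2^{-n}\sum_{j=0}^{2^{n}-1}\cot((z+j\pi)/2^{n})$, then peel off $j=0$ and $j=2^{n-1}$ and pair the remaining terms via the $\pi$-periodicity of $\cot$. One small slip: applying the doubling identity to $\cot((z+j\pi)/2^{n})$ yields terms indexed by $j$ and $j+2^{n}$, not $2j$ and $2j+1$; either pair of indices partitions $\{0,\dots,2^{n+1}-1\}$, so the argument survives unchanged.
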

\begin{proof}
Recall that $2\cot(2z)=\cot(z)+\cot(z+\pi /2)$. If we continually iterate this formula we will find that
$$\cot(z)=\frac{1}{2^n}\sum_{j=0}^{2^n-1}\cot\left(\frac{z+j\pi}{2^n}\right),$$
which can be proved by induction. Removing the $j=0$ and $j=2^{n-1}$ terms and recalling that $\cot(z+\pi /2)=-\tan(z)$ gives us
\begin{eqnarray*}\cot(z)&=&\frac{\cot(z/2^n)}{2^n}-\frac{\tan(z/2^n)}{2^n}\\&+&\frac{1}{2^n}
\left(\left(\sum_{j=1}^{2^{n-1}-1}\cot\left(\frac{z+j\pi}{2^n}\right)\right)+
\left(\sum_{j=2^{n-1}+1}^{2^n -1}\cot\left(\frac{z+j\pi}{2^n}\right)\right)\right).\\ \end{eqnarray*}
All we have to do now is observe that, as $\cot(z+\pi)=\cot(z)$, we can say that
$$\sum^{2^n-1}_{j=2^{n-1}+1}\cot\left(\frac{z+j\pi}{2^n}\right)=\sum_{j=1}^{2^{n-1}-1}\cot\left(\frac{z-j\pi}{2^n}\right),$$
which completes the proof.
\end{proof}
\begin{lem}\label{bernlem3}
The function $z\cot(z)$ can therefore be expressed as
\begin{equation} \label{eqbernlem3}
z\cot(z)=1-2\sum_{j=1}^\infty\frac{z^2}{j^2\pi^2-z^2}.
\end{equation}
\end{lem}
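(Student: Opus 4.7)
The strategy is to take $n\to\infty$ in the identity of Lemma \ref{bernlem2}: since the left-hand side $\cot(z)$ is independent of $n$, any legitimate limit of the right-hand side must reproduce it. Once the limit is in hand, I would multiply through by $z$ and rearrange signs to land on (\ref{eqbernlem3}).

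For the individual limits, the key tool is the elementary asymptotic $\cot(w) = 1/w + O(w)$ together with $\tan(w) = O(w)$ as $w\to 0$. Applied to the first two pieces of the right-hand side this gives $\cot(z/2^n)/2^n \to 1/z$ and $\tan(z/2^n)/2^n \to 0$; applied termwise inside the sum it gives, for each fixed $j$, $\cot((z\pm j\pi)/2^n)/2^n \to 1/(z\pm j\pi)$. Pairing the $\pm j$ halves then collapses them via
$$\frac{1}{z+j\pi} + \frac{1}{z-j\pi} = \frac{2z}{z^2-j^2\pi^2} = -\frac{2z}{j^2\pi^2-z^2},$$
so formally $\cot(z) = 1/z - 2\sum_{j=1}^\infty z/(j^2\pi^2-z^2)$, and multiplying through by $z$ produces (\ref{eqbernlem3}).

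The main obstacle is rigorously justifying the interchange of $\lim_{n\to\infty}$ with the summation, since both the upper index of summation (which grows like $2^{n-1}$) and the arguments of each $\cot$ depend on $n$. To handle this I would sharpen the asymptotic to $\cot(w) = 1/w - w/3 + O(w^3)$ valid for $|w|$ below the first pole; after dividing by $2^n$ and substituting $w=(z\pm j\pi)/2^n$, the linear remainder contributes $-2z/(3\cdot 4^n)$ per pair, which summed over $j\leq 2^{n-1}$ is $O(z/2^n)$, while the cubic remainder's odd-in-$j$ parts cancel under pairing, leaving $O((|z|^3 + j^2\pi^2|z|)/16^n)$ per pair, whose sum over $j$ is again $O(1/2^n)$. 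These uniform error bounds, together with the absolute convergence of $\sum_j 1/(j^2\pi^2 - z^2)$ for $z$ off the multiples of $\pi$, justify the termwise passage to the limit and yield (\ref{eqbernlem3}).
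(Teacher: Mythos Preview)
Your approach is essentially identical to the paper's: multiply the identity of Lemma~\ref{bernlem2} through by $z$, send $n\to\infty$, and use the local expansions $\cot(w)=1/w+O(w)$, $\tan(w)=O(w)$ to evaluate each piece, then pair the $\pm j$ terms into $2z/(z^2-j^2\pi^2)$. The paper is content with the cruder per-term bound $\tfrac{z}{2^n}\cot((z\pm j\pi)/2^n)=\tfrac{z}{z\pm j\pi}+O(4^{-n})$ summed over $j\le 2^{n-1}-1$, whereas you carry the expansion one order further and exploit the $\pm j$ pairing to kill the $j$-dependence of the remainder---a refinement that actually tightens a loose step in the paper's own estimate.
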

\begin{proof}
In order to obtain this, we first multiply both sides of equation (\ref{eqbernlem2}) by $z$ to get
\begin{equation} \label{bigzcotz}
z\cot(z)=\frac{z}{2^n}\cot(z/2^n)-\frac{z}{2^n}\tan(z/2^n)+\sum_{j=1}^{2^{n-1}-1}\frac{z}{2^n}\left(\cot\left(\frac{z+j\pi}{2^n}\right)+\cot\left(\frac{z-j\pi}{2^n}\right)\right).
\end{equation}
\noindent Let us now take the limit of the right hand side as $n$ tends to infinity. First recall that the Taylor series for $x\cot(x)$ and $x\tan(x)$ can be respectively expressed as
$$x\cot(x)=1+O(x^2)$$
and
$$x\tan(x)=x^2+O(x^4).$$
\noindent Hence, if we substitute $x=z/2^n$ into both of these we can see that
\begin{equation}\label{cottay}
\lim_{n\rightarrow\infty}\left[\frac{z}{2^n}\cot\left(\frac{z}{2^n}\right)\right]=1
\end{equation}
and
\begin{equation}\label{tantay}
\lim_{n\rightarrow\infty}\left[\frac{z}{2^n}\tan\left(\frac{z}{2^n}\right)\right]=0.
\end{equation}
\noindent Now we have dealt with the expressions outside the summation and so we need to consider the ones inside. To make things slightly easier for the moment, let us consider both of the expressions at the same time. Using Taylor series again, we can see that
\begin{equation} \label{cottay2}
\frac{z}{2^n}\cot\left(\frac{z\pm j\pi}{2^n}\right)=\frac{z}{z\pm j\pi}+O(4^{-n}).
\end{equation}
\noindent Substituting equations (\ref{cottay}), (\ref{tantay}) and (\ref{cottay2}) into the right hand side of equation (\ref{bigzcotz}) gives that
$$z\cot(z)=1-\lim_{n\rightarrow\infty}\sum_{j=1}^{2^{n-1}-1}\left[\frac{z}{z+j\pi}+\frac{z}{z-j\pi}+O(4^{-n})\right],$$
which simplifies a little to give
$$z\cot(z)=1-2\sum^\infty_{j=1}\frac{z^2}{j^2\pi^2-z^2}-\lim_{n\rightarrow\infty}\sum_{j=1}^{2^{n-1}-1}O(4^{-n}).$$
By Definition \ref{bigoh}, it can be seen that
$$|\sum_{j=1}^{2^{n-1}-1}O(4^{-n})| \leq C(2^{n-1}-1)4^{-n},$$
which clearly converges to zero as $n\rightarrow \infty$, thus completing the proof.
\end{proof}
\begin{lem} \label{bernlem4}
For $|z| < \pi$, $z\cot(z)$ has the expansion
\begin{equation} \label{eqbernlem4}
z\cot(z)=1-2\sum_{j=1}^\infty\zeta(2n)\frac{z^{2n}}{\pi^{2n}}.
\end{equation}
\end{lem}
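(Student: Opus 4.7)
The plan is to start from the partial fractions expansion given in Lemma \ref{bernlem3}, namely
$$z\cot(z)=1-2\sum_{j=1}^\infty\frac{z^2}{j^2\pi^2-z^2},$$
and rewrite each summand as a geometric series in powers of $z^2/(j\pi)^2$, thereby exposing the zeta values as inner sums over $j$.

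More concretely, I would first note that for $|z|<\pi$ and every $j\geq 1$ we have $|z/(j\pi)|<1$, so
$$\frac{z^2}{j^2\pi^2-z^2}=\frac{(z/(j\pi))^2}{1-(z/(j\pi))^2}=\sum_{n=1}^\infty\frac{z^{2n}}{j^{2n}\pi^{2n}}.$$
Substituting this into the formula from Lemma \ref{bernlem3} gives a double sum over $j$ and $n$. I would then interchange the order of summation so that the inner sum becomes $\sum_{j=1}^\infty j^{-2n}=\zeta(2n)$, yielding
$$z\cot(z)=1-2\sum_{n=1}^\infty\zeta(2n)\frac{z^{2n}}{\pi^{2n}},$$
which is the claimed expansion (with the summation index relabelled from $j$ to $n$, correcting the typographical clash in the statement).

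The one step that requires care, and which I expect to be the main technical obstacle, is justifying the interchange of the two infinite sums. For this I would invoke absolute convergence via Fubini/Tonelli for series: for $|z|<\pi$ the double sum of absolute values is
$$\sum_{j=1}^\infty\sum_{n=1}^\infty\frac{|z|^{2n}}{j^{2n}\pi^{2n}}=\sum_{j=1}^\infty\frac{(|z|/(j\pi))^2}{1-(|z|/(j\pi))^2}\leq\frac{1}{1-(|z|/\pi)^2}\sum_{j=1}^\infty\frac{(|z|/\pi)^2}{j^2},$$
and the remaining sum is $\zeta(2)(|z|/\pi)^2<\infty$. Hence the rearrangement is legitimate and the proof concludes immediately. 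No further machinery is needed, since Lemma \ref{bernlem3} already does the heavy analytic lifting of converting $z\cot(z)$ into its partial-fractions form.
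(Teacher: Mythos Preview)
Your proposal is correct and follows essentially the same route as the paper: start from Lemma~\ref{bernlem3}, expand each term as a geometric series in $(z/j\pi)^2$, and swap the two sums to produce $\zeta(2n)$. If anything, your justification of the interchange via an explicit absolute-convergence bound is more careful than the paper's, which simply asserts that both series are absolutely convergent.
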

\begin{proof}
Take the summand of equation (\ref{bernlem3}) and multiply both the numerator and denominator by $(j\pi)^{-2}$ to obtain
$$z\cot(z)=1-2\sum_{j=1}^\infty\frac{(z/j\pi)^2}{1-(z/j\pi)^2}.$$
But, we can note that the summand can be expanded as an infinite geometric series. Hence we can write this as
$$z\cot(z)=1-2\sum_{j=1}^\infty\sum_{n=1}^\infty\left(\frac{z}{j\pi}\right)^{2n},$$
which
$$=1-2\sum_{n=1}^\infty\left(\frac{z}{\pi}\right)^{2n}\zeta(2n)$$
as long as the geometric series converges (i.e. $|z| < \pi$). Note that exchanging the summations in such a way is valid as both of the series are absolutely convergent.
\end{proof}
\noindent Now, we can complete the proof of the Theorem \ref{zetabern} by equating equations (\ref{eqbernlem1}) and (\ref{eqbernlem4}) to see that
$$1+\sum_{n=2}^\infty=(-4)^n\frac{B_{2n}z^{2n}}{(2n)!}=1-2\sum_{n=1}^\infty\left(\frac{z}{\pi}\right)^{2n}\zeta(2n).$$
If we then strip away the 1 terms and remove the summations we obtain the identity
$$(-4)^n\frac{B_{2n}z^{2n}}{(2n)!}=-2\frac{z^{2n}}{\pi^{2n}}\zeta(2n),$$
which rearranges to complete the proof of Theorem \ref{zetabern} as required.

Now that have proven this beautiful formula (thanks again, Euler) we can use it to calculate the sums of positive even values of the zeta function. First, let us rewrite the result of Theorem \ref{zetabern} to give us
$$\zeta(2n)=\frac{(2\pi)^{2n}|B_{2n}|}{2(2n)!}.$$
From this, we can easily calculate specific values such as
$$\zeta(6)=\sum_{n=1}^\infty\frac{1}{j^6}=\frac{(2\pi)^6|B_{6}|}{2(6)!}=\frac{\pi^{6}}{945},$$
$$\zeta(8)=\sum_{n=1}^\infty\frac{1}{j^8}=\frac{(2\pi)^8|B_{8}|}{2(8)!}=\frac{\pi^{8}}{9450},$$
etc. This is a beautiful formula and it is unfortunate that no similar formula has been discovered for $\zeta(2n+1)$.
\par However, that's not to say that there aren't interesting results regarding these values! There have been recent results concerning the values of the zeta function for odd integers. For example, Ap\'{e}ry's proof of the irrationality of
$\zeta(3)$ in 1979 or Matilde Lal\'{i}n's integral representations of $\zeta(3)$ and $\zeta(5)$ by the use of Mahler Measures.
\par  Mercifully, special values for $\zeta(-2n)$ and $\zeta(-2n+1)$ have been found, the latter of which also involves Bernoulli numbers! It is to our good fortune that we will have to take a whirlwind tour through some of the properties of the Gamma function in order to get to them.

\section{The Gamma Function}
The Gamma function is closely related to the zeta function and as such warrants some exploration of its more relevant properties. We will only be discussing a few of the qualities of this function but the reader should note that it has many applications in statistics (Gamma and Beta distributions) and orthoganal functions (Bessel functions). It was first investigated by Euler when he was considering factorials of non-integer numbers and it was through this study that many of its classical properties were established.
\par This section will follow Chapter 8 in \cite{Stopple} with sprinklings from \cite{Gamma}. First, let us begin with a definition from Euler.
\begin{defn} \label{gamma}
For $\Re(s)> 0$ we define $\Gamma(s)$ as
$$\Gamma(s)=\int^\infty_0 t^{s-1}e^{-t}dt.$$
\end{defn}
\noindent Now, this function initially looks rather daunting and irrelavent. We will see, however, that is does have many fascinating properties.
Among the most basic are the following two identities ...
\begin{cor} \label{sgammas}
$\Gamma(s)$ has the recursive property
\begin{equation}\label{eqsgammas}
\Gamma(s+1)=s\Gamma(s).
\end{equation}
\end{cor}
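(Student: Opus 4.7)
The plan is to prove this by a single application of integration by parts to the defining integral of $\Gamma(s+1)$. Starting from Definition \ref{gamma}, I would write
$$\Gamma(s+1)=\int_0^\infty t^{s}e^{-t}\,dt,$$
and then integrate by parts with $u=t^s$ and $dv=e^{-t}\,dt$, so that $du=s t^{s-1}\,dt$ and $v=-e^{-t}$. This gives
$$\Gamma(s+1)=\left[-t^{s}e^{-t}\right]_0^\infty+s\int_0^\infty t^{s-1}e^{-t}\,dt,$$
and the remaining integral on the right is precisely $\Gamma(s)$ by the definition.

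The only thing left to verify is that the boundary term vanishes. At the upper limit, I would argue that $e^{-t}$ decays faster than any polynomial in $t$ grows, so $t^{s}e^{-t}\to 0$ as $t\to\infty$; strictly speaking, since $s$ is complex, one should apply this to $|t^{s}e^{-t}|=t^{\Re(s)}e^{-t}$, which still tends to zero. At the lower limit, the hypothesis $\Re(s)>0$ is exactly what ensures $|t^s|=t^{\Re(s)}\to 0$ as $t\to 0^+$, so the boundary term contributes nothing.

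This is the only step that needs any care, and it is only mildly subtle because $s$ is allowed to be complex rather than real; the hypothesis $\Re(s)>0$ built into the definition handles both ends cleanly. After verifying the boundary contribution, the identity $\Gamma(s+1)=s\Gamma(s)$ follows immediately, completing the proof.
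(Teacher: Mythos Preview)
Your proof is correct and follows exactly the same approach as the paper: a single integration by parts on the defining integral with $u=t^s$ and $dv=e^{-t}\,dt$. If anything, you give more justification than the paper does, since you explicitly check why the boundary term $[-t^s e^{-t}]_0^\infty$ vanishes at both endpoints (including the complex-$s$ subtlety), whereas the paper simply writes it as $[0]$.
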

\begin{proof} We can prove this by performing a basic integration by parts on the gamma function. Note that
\begin{eqnarray*}
\Gamma(s+1)&=&\int^\infty_0 t^s e^{-t}dt=\left[-t^s e^{-t}\right]^\infty_0+s\int^\infty_0 t^{s-1}e^{-t}dt\\
&=&[0]+s\Gamma(s)\\
\end{eqnarray*}
as required.
\end{proof}

\begin{cor}\label{gammfac}
For $n\in\mathbb{N}$
\begin{equation} \label{eqgammafac}
\Gamma(n+1)=n!
\end{equation}
\end{cor}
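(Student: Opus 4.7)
The plan is to prove this by induction on $n$, using the recursive identity $\Gamma(s+1) = s\Gamma(s)$ from Corollary \ref{sgammas} as the inductive step and a direct integral computation as the base case.

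First I would establish the base case $n = 0$, which amounts to showing $\Gamma(1) = 1$ (since $0! = 1$). This is a quick direct calculation from Definition \ref{gamma}: we have
$$\Gamma(1) = \int_0^\infty t^0 e^{-t}\,dt = \int_0^\infty e^{-t}\,dt = \left[-e^{-t}\right]_0^\infty = 1.$$
This anchors the induction.

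Next, for the inductive step, I would assume $\Gamma(k+1) = k!$ for some $k \in \mathbb{N}$ and then apply equation (\ref{eqsgammas}) with $s = k+1$ to obtain
$$\Gamma(k+2) = (k+1)\Gamma(k+1) = (k+1) \cdot k! = (k+1)!.$$
This closes the induction and delivers the claim for all $n \in \mathbb{N}$.

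I do not expect any real obstacle here. The entire force of the proof has already been supplied by Corollary \ref{sgammas}; the only ingredient still needed is the base case, and that is a textbook exponential integral. The only minor point worth noting is a convention issue — depending on whether the paper takes $\mathbb{N}$ to include $0$ — but the induction can be started equally easily at $n = 1$ by observing $\Gamma(2) = 1 \cdot \Gamma(1) = 1 = 1!$, so the argument is robust either way.
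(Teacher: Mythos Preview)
Your proof is correct and follows essentially the same approach as the paper: the paper's proof simply says to ``consider Corollary \ref{sgammas} and iterate,'' which is exactly your induction argument, and your explicit base-case computation of $\Gamma(1)=1$ only makes the iteration rigorous where the paper leaves it implicit.
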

\begin{proof}
Just consider Corollary \ref{sgammas} and iterate to complete the proof.
\end{proof}
\noindent It is this property that really begins to tell us something interesting about the Gamma function. Now that we know that the function calculates factorials for integer values, we can use it to `fill in' non-integer values, which is the reason why Euler introduced it.
\newline \newline
\noindent \textit{Remark.} We can use the fact that $\Gamma(s)=\Gamma(s+1)/s$ to see that, as $s$ tends to 0, $\Gamma(s)\rightarrow\infty$. We can also use this recursive relation to prove that the Gamma function has poles at all of the negative integers. However, the more beautiful proof of this is to come in Section 6.

\begin{lem} \label{polar}
We can calculate that
$$\Gamma(3/2)=\frac{\sqrt{\pi}}{2}.$$
\end{lem}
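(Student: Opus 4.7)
The plan is to reduce $\Gamma(3/2)$ to the classical Gaussian integral, whose value the lemma's name (\textit{polar}) already hints we should obtain via polar coordinates. First I would apply the recursion of Corollary \ref{sgammas} once, writing
$$\Gamma(3/2)=\Gamma(1/2+1)=\tfrac{1}{2}\Gamma(1/2),$$
so that everything reduces to computing $\Gamma(1/2)$.

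Next I would expand $\Gamma(1/2)$ using Definition \ref{gamma} and perform the substitution $t=u^2$, $dt=2u\,du$, which turns $t^{-1/2}e^{-t}dt$ into $2e^{-u^2}du$. This gives
$$\Gamma(1/2)=\int_0^\infty t^{-1/2}e^{-t}\,dt=2\int_0^\infty e^{-u^2}\,du.$$
The remaining task is therefore to evaluate the Gaussian integral $I=\int_0^\infty e^{-u^2}du$.

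For that I would use the standard trick: square $I$ and interpret it as a double integral over the first quadrant,
$$I^2=\int_0^\infty\!\!\int_0^\infty e^{-(u^2+v^2)}\,du\,dv,$$
then switch to polar coordinates $u=r\cos\theta$, $v=r\sin\theta$, with $\theta$ ranging over $[0,\pi/2]$ and Jacobian factor $r$. The inner radial integral $\int_0^\infty re^{-r^2}\,dr$ is elementary (it equals $1/2$ by the antiderivative $-\tfrac12 e^{-r^2}$), and the $\theta$ integral contributes a factor $\pi/2$, yielding $I^2=\pi/4$, hence $I=\sqrt{\pi}/2$. Combining everything, $\Gamma(1/2)=2\cdot\sqrt{\pi}/2=\sqrt{\pi}$ and so $\Gamma(3/2)=\sqrt{\pi}/2$.

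The only genuinely nontrivial step is the Gaussian evaluation; the rest is mechanical application of results already in the paper. I would not expect any real obstacle, though a careful reader might want justification for swapping the order of integration in the double integral, which is immediate from Tonelli's theorem since the integrand is nonnegative.
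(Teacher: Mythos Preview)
Your proof is correct and follows essentially the same route as the paper: reduce $\Gamma(3/2)$ to the Gaussian integral $\int_0^\infty e^{-u^2}\,du$ and evaluate that by squaring and passing to polar coordinates. The only cosmetic differences are that the paper writes $\Gamma(3/2)=\int_0^\infty e^{-x^2}\,dx$ directly (without first invoking the recursion to $\Gamma(1/2)$) and carries out the polar integration over all of $\mathbb{R}^2$ rather than the first quadrant, using evenness at the end instead.
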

\begin{proof}
We observe that
\begin{equation} \label{gamma32}
\Gamma(3/2)=\int^\infty_0\exp(-x^2)dx.
\end{equation}
\noindent This is an important result that is the foundation of the normal distribution and it is also easily calculable. We do this by first considering the double integral
$$I=\int^\infty_{-\infty}\int^\infty_{-\infty}\exp(-x^2-y^2)dxdy.$$
If we switch to polar co-ordinates using the change of variables $x=r\cos(\theta)$, $y=r\sin(\theta).$ Noting that $dydx=rd\theta dr$, we have
$$I=\int_0^{2\pi}\int_0^\infty r \exp(-r^2)drd\theta=\pi\int^\infty_02r\exp(-r^2)dr=\pi[\exp(-r^2)]^\infty_0=\pi.$$
We can then seperate the original integral into two seperate integrals to obtain
$$I=\left(\int^\infty_{-\infty}\exp{-x^2}dx\right)\left(\int^\infty_{-\infty}\exp{-y^2}dy\right)=\pi.$$
Noting that the two integrals are identical and are also both even functions, we can see that integrating one of them from zero to infinity completes the proof as required
\end{proof}
\begin{cor} \label{gamma2nplus1}
Consider $(n)!_2=n(n-2)(n-4)...,$, which terminates at $1$ or $2$ depending on whether $n$ is odd or even respectively. Then for $n\in\mathbb{N},$
$$\Gamma\left(\frac{2n+1}{2}\right)=\frac{\sqrt{\pi}(2n-1)!_2}{2^n}.$$
\end{cor}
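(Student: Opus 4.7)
The plan is to prove the identity by induction on $n$, using Corollary \ref{sgammas} as the recursion engine and Lemma \ref{polar} to anchor the base case. Since $\Gamma(s+1) = s\Gamma(s)$, the half-integer values of $\Gamma$ form a chain starting from $\Gamma(3/2)$, and the double factorial $(2n-1)!_2 = (2n-1)(2n-3)\cdots 3 \cdot 1$ is precisely the telescoping product one picks up when walking along that chain.

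For the base case $n=1$, I would simply observe that the claimed formula reads $\Gamma(3/2) = \sqrt{\pi}\cdot 1/2$, which is exactly the content of Lemma \ref{polar}. For the inductive step, assume the identity holds at $n$. Then I would compute
\begin{align*}
\Gamma\!\left(\frac{2(n+1)+1}{2}\right) &= \Gamma\!\left(\frac{2n+1}{2}+1\right) = \frac{2n+1}{2}\,\Gamma\!\left(\frac{2n+1}{2}\right) \\
&= \frac{2n+1}{2}\cdot\frac{\sqrt{\pi}\,(2n-1)!_2}{2^n} = \frac{\sqrt{\pi}\,(2n+1)!_2}{2^{n+1}},
\end{align*}
where the first equality in the second line uses Corollary \ref{sgammas} and the second uses the inductive hypothesis. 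Since $(2(n+1)-1)!_2 = (2n+1)!_2 = (2n+1)(2n-1)!_2$, this is exactly the claimed formula at $n+1$, closing the induction.

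There is no substantive obstacle here; the only mild point of care is making sure the double factorial bookkeeping lines up, i.e.\ that multiplying by the factor $(2n+1)/2$ promotes $(2n-1)!_2/2^n$ to $(2n+1)!_2/2^{n+1}$ cleanly, and that the base case of the induction is correctly aligned with Lemma \ref{polar} rather than with $\Gamma(1/2)$.
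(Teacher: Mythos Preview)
Your proof is correct and follows essentially the same approach as the paper: induction on $n$, with the base case anchored by Lemma~\ref{polar} and the inductive step driven by the recursion $\Gamma(s+1)=s\Gamma(s)$ from Corollary~\ref{sgammas}. If anything, your version is more carefully spelled out, since the paper's proof omits an explicit statement of the base case and records only the inductive step.
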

\begin{proof}  We will prove this by induction. Consider that
$$\Gamma\left(\frac{2n+3}{2}\right)=\Gamma\left(\frac{2(n+1)+1}{2}\right)=\frac{(2n+1)}{2}.\frac{(2n-1)!_2 \sqrt{\pi}}{2^n}=\frac{(2n+1)!_2 \sqrt{\pi}}{2^{n+1}}.$$
Noting that the leftmost and rightmost equalities are equal by definition completes the proof.
\end{proof}
\noindent \textit{Remark.} We can use this relationship $\Gamma(1+1/s)=(1/s)\Gamma(s)$ to see, for example, that
$$\Gamma(5/2)=\frac{3\sqrt{\pi}}{4}, \hspace{2mm} \Gamma(7/2)=\frac{15\sqrt{\pi}}{8},$$
etc.
\begin{cor} \label{factminushalf}
We can computer the `factorial' of $-1/2$ as
$$\Gamma(-1/2)=-2\sqrt{\pi}.$$
\end{cor}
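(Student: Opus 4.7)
The plan is to walk the recursion $\Gamma(s+1)=s\Gamma(s)$ of Corollary \ref{sgammas} backwards from the one value of the Gamma function we already have in hand, namely $\Gamma(3/2)=\sqrt{\pi}/2$ from Lemma \ref{polar}. Two applications of the recursion will take us from the argument $3/2$ down through $1/2$ to $-1/2$, and each step is just a division.

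First I would rearrange equation (\ref{eqsgammas}) into the form $\Gamma(s)=\Gamma(s+1)/s$, which is the version useful for decreasing the argument. Setting $s=1/2$ and plugging in the known value $\Gamma(3/2)=\sqrt{\pi}/2$ gives
$$\Gamma(1/2)=\frac{\Gamma(3/2)}{1/2}=2\cdot\frac{\sqrt{\pi}}{2}=\sqrt{\pi}.$$
Then setting $s=-1/2$ in the same rearranged recursion and substituting the value of $\Gamma(1/2)$ just computed yields
$$\Gamma(-1/2)=\frac{\Gamma(1/2)}{-1/2}=-2\sqrt{\pi},$$
which is precisely the claim.

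The only conceptual subtlety worth flagging is that Definition \ref{gamma} supplies $\Gamma(s)$ only for $\Re(s)>0$, so the equality $\Gamma(-1/2)=\Gamma(1/2)/(-1/2)$ is not a consequence of the integral representation itself; rather, it is the functional equation (\ref{eqsgammas}) being used to \emph{extend} $\Gamma$ to the point $s=-1/2$. The paper has already announced (in the Remark following Corollary \ref{gammfac}) that this is the legitimate way to define $\Gamma$ at negative non-integer arguments, and that the full story of the poles at the non-positive integers will be addressed in Section 6. With that convention in place, the computation above is complete and there is no real obstacle.
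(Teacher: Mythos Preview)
Your proof is correct and follows essentially the same route as the paper: both arguments invert the recursion of Corollary~\ref{sgammas} to step backwards from the known value $\Gamma(3/2)=\sqrt{\pi}/2$ of Lemma~\ref{polar}. The paper merely reparametrises the recursion as $\Gamma(1/s)=\tfrac{1-s}{s}\Gamma\!\left(\tfrac{1-s}{s}\right)$ before applying it, whereas you use the cleaner form $\Gamma(s)=\Gamma(s+1)/s$ directly; the content is identical.
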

\begin{proof} We can rework Corollary \ref{sgammas} to show that
$$\Gamma(1/s)=\frac{1-s}{s}\Gamma\left(\frac{1-s}{s}\right),$$
from which the corollary can easily be proven.
\end{proof}

\subsection{The Euler Reflection Formula}
This chapter will use a slightly different definition of the Gamma function and will follow source \cite{Gamma}. First let us consider the definition of the very important Euler constant $\gamma$.
\begin{defn} \label{eulerconstant}
Euler's constant $\gamma$ is defined as
$$\gamma=\lim_{m\rightarrow\infty}\left(1+\frac{1}{2}+\frac{1}{3}+..+\frac{1}{m}-\log(m)\right).$$
\end{defn}
\noindent We will then use Gauss' definition for the Gamma function which can be written as follows ...
\begin{defn} \label{gammagauss}
For $s>0$ we can define
$$\Gamma_h(s)=\frac{h!h^s}{s(s+1)..(s+p)}=\frac{h^s}{s(1+s)(1+s/2)...(1+s/h)}$$
and
$$\Gamma(s)=\lim_{h\rightarrow\infty}\Gamma_h(s).$$
\end{defn}
\noindent This does not seem immediately obvious but the relationship is true and is proven for $\Re(s)>0$ in \cite{Whit}. So now that we have these definitions we can work on a well known theorem.
\begin{thm} \label{gammaprod}
The Gamma function can be written as the following infinite product;
$$\frac{1}{\Gamma(s)}=se^{\gamma s}\prod_{n=1}^\infty \left(1+\frac{s}{n}\right)e^{-s/n}.$$
\end{thm}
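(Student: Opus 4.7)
The plan is to begin with Gauss's product representation (Definition \ref{gammagauss}) and rearrange it into the Weierstrass form by importing Euler's constant through the expansion of $h^s$. First I would take the reciprocal of Gauss's definition, which gives
$$\frac{1}{\Gamma(s)}=\lim_{h\to\infty}\frac{s(1+s)(1+s/2)\cdots(1+s/h)}{h^s}.$$
The bridge to the desired formula is the observation that $h^s=e^{s\log h}$, combined with Definition \ref{eulerconstant}. Writing $H_h=1+1/2+\cdots+1/h$ for the $h$-th harmonic sum, the defining property of $\gamma$ is precisely $H_h-\log h\to\gamma$, and so
$$h^{-s}=e^{-s\log h}=e^{s(H_h-\log h)}\,e^{-sH_h}=e^{s(H_h-\log h)}\prod_{n=1}^{h}e^{-s/n}.$$

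Substituting this into the reciprocal expression and regrouping each exponential factor $e^{-s/n}$ with its corresponding $(1+s/n)$ term yields
$$\frac{1}{\Gamma(s)}=\lim_{h\to\infty}\Bigl[s\,e^{s(H_h-\log h)}\Bigr]\cdot\lim_{h\to\infty}\prod_{n=1}^{h}\left(1+\frac{s}{n}\right)e^{-s/n}.$$
The first limit collapses to $se^{\gamma s}$ by Definition \ref{eulerconstant}, so I only need to show that the second limit equals the infinite product $\prod_{n=1}^\infty(1+s/n)e^{-s/n}$, which is by definition what convergence of that product means.

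The main obstacle is therefore justifying that the infinite product converges in the first place. For this I would invoke Theorem \ref{prodcon}: it suffices to show that $\sum|a_n|<\infty$, where $1+a_n=(1+s/n)e^{-s/n}$. The key calculation is a Taylor expansion,
$$(1+x)e^{-x}=(1+x)\left(1-x+\frac{x^2}{2}-\frac{x^3}{6}+\cdots\right)=1-\frac{x^2}{2}+O(x^3),$$
so substituting $x=s/n$ gives $a_n=O\!\left(|s|^2/n^2\right)$. Hence $\sum|a_n|$ converges by comparison with $\sum n^{-2}$, the product converges absolutely, and the displayed equality chains into the statement of the theorem. The splitting of the $h\to\infty$ limit into two separate limits is legitimate because both factors converge individually, which is the only analytic subtlety worth flagging beyond the product-convergence step.
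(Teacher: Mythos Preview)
Your proposal is correct and follows essentially the same route as the paper: start from Gauss's product (Definition \ref{gammagauss}), expand $h^{\pm s}$ via the harmonic sum to smuggle in $\gamma$ from Definition \ref{eulerconstant}, and regroup into the Weierstrass form. The only notable difference is that you work directly with $1/\Gamma(s)$ and give a genuine convergence argument for the product via Theorem \ref{prodcon} and the Taylor estimate $a_n=O(n^{-2})$, whereas the paper works with $\Gamma_h(s)$ itself and simply asserts that ``the exponential term forces'' convergence; your version is the more careful of the two.
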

\begin{proof}
Before we start with the derivation, let us note that the infinite product is convergent because the exponential term forces it. Now that we have cleared that from our conscience, we will begin by using Definition \ref{gammagauss} and say that
$$\Gamma_h(s)=\frac{h^s}{s(1+s)(1+s/2)...(1+s/h)}.$$
Now we can also see that
\begin{eqnarray*}
h^s&=&\exp\left(s\log(h)\right)\\
&=&\exp\left\{s\left(\log(h)-1-\frac{1}{2}-...-\frac{1}{h}\right)\right\}\exp\left\{s\left(1+\frac{1}{2}+...+\frac{1}{h}\right)\right\}.\\
\end{eqnarray*}
We can then observe that
$$\Gamma_h(s)=\frac{1}{s}\frac{e^s}{1+s}\frac{e^{s/2}}{1+s/2}...\frac{e^{s/h}}{1+s/h}\exp\left\{s\left(\log(h)-1-\frac{1}{2}-...-\frac{1}{h}\right)\right\},$$
which we can write as the product
$$\Gamma_h(s)=\frac{1}{s}\exp\left\{s\left(\log(h)-1-\frac{1}{2}-...-\frac{1}{h}\right)\right\}\prod_{n=1}^h\frac{1}{1+s/n}e^{s/n}.$$
All we need to do now is to take the limit of this as $h$ tends to infinity and use Definition \ref{eulerconstant} to prove the theorem as required.
\end{proof}
\noindent This theorem is very interesting as it allows us to prove two really quite beautiful identities, known as the Euler Reflection formulae. But before we do this, we are going to need another way of dealing with the sine function. It should be noted that the method of approach that we are going to use is not completely rigorous. However, it can be proven rigorously using the Weierstrass Factorisation Theorem - a discussion of which can be found in \cite{Whit}.
\begin{thm} \label{eulersin}
The sine function has the infinite product
$$\sin(\pi s)=\pi s\prod_{n=1}^\infty\left(1-\frac{s^2}{n^2}\right).$$
\end{thm}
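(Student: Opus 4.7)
The plan is to derive the product formula from the cotangent expansion that was already established in Lemma \ref{bernlem3}, together with the elementary identity $\cot(z) = \frac{d}{dz}\log\sin(z)$. This approach works neatly because Lemma \ref{bernlem3} already contains all the analytic information about the zeros of $\sin$ that we need.

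First I would rewrite the summand of Lemma \ref{bernlem3} using partial fractions: since
$$\frac{1}{z-j\pi} + \frac{1}{z+j\pi} = -\frac{2z}{j^2\pi^2 - z^2},$$
dividing equation (\ref{eqbernlem3}) by $z$ gives the Mittag--Leffler-style expansion
$$\cot(z) = \frac{1}{z} + \sum_{j=1}^{\infty}\left(\frac{1}{z-j\pi}+\frac{1}{z+j\pi}\right).$$
Next, I would observe that both sides are (formally) the logarithmic derivative of $\sin(z)$ and an infinite product respectively, since
$$\frac{d}{dz}\log\left(1-\frac{z^2}{j^2\pi^2}\right) = \frac{1}{z-j\pi}+\frac{1}{z+j\pi}.$$
Integrating term by term from $0$ to $z$ (along a path avoiding the zeros of $\sin$) and using $\log(\sin(z)/z) \to 0$ as $z\to 0$ to pin down the constant of integration, one obtains
$$\log\frac{\sin(z)}{z} = \sum_{j=1}^{\infty}\log\left(1-\frac{z^2}{j^2\pi^2}\right).$$
Exponentiating gives $\sin(z) = z\prod_{j=1}^{\infty}(1 - z^2/(j\pi)^2)$, and the substitution $z = \pi s$ yields exactly the claimed identity.

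The step I expect to be the genuine obstacle is the term-by-term integration of the infinite sum, since this requires uniform convergence of the partial sums on compact subsets of $\mathbb{C}\setminus\pi\mathbb{Z}$ and a careful choice of integration path. This is precisely the lack of rigour that the author flagged in the statement: a fully rigorous treatment needs the Weierstrass Factorisation Theorem, which controls the convergence of the product by inserting exponential convergence factors (as was done in Theorem \ref{gammaprod} for $1/\Gamma(s)$). For the purposes of the present exposition, however, one can simply take the integration and exponentiation formally, trusting that the resulting product — which has the correct zeros at every integer and the correct leading behaviour $\sin(\pi s)\sim \pi s$ as $s\to 0$ — agrees with $\sin(\pi s)$. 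Determining the leading constant by the $s\to 0$ limit is the clean finishing touch.
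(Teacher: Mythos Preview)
Your argument is sound as a heuristic derivation: the partial-fraction rewriting of Lemma~\ref{bernlem3}, the identification of both sides as logarithmic derivatives, and the termwise integration all check out, and you have correctly flagged the one step (uniform convergence justifying termwise integration) that separates this from a rigorous proof.

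The comparison with the paper is simple: the paper does \emph{not} prove Theorem~\ref{eulersin} at all. It merely states the product formula, remarks that a rigorous proof requires the Weierstrass Factorisation Theorem (deferring to \cite{Whit}), and then uses the result as a black box in the proof of Theorem~\ref{eulerreflection}. So your proposal actually goes further than the paper. What is nice about your route is that it recycles Lemma~\ref{bernlem3}, which the paper has already worked hard to establish; this makes the sine product feel like a natural by-product of the $z\cot(z)$ analysis rather than an unmotivated import. By contrast, the paper's implicit suggestion (Weierstrass factorisation) would build the product directly from the zero set of $\sin(\pi s)$ and control convergence via canonical factors, which is cleaner for rigour but disconnected from the surrounding material. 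Your approach buys narrative coherence; the Weierstrass route buys a self-contained proof that does not depend on delicate interchange of limit and integral.
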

\begin{thm} \label{eulerreflection}
The Gamma function has the following reflective relation -
$$\frac{1}{\Gamma(s)\Gamma(1-s)}=\frac{\sin(\pi s)}{\pi}.$$
\end{thm}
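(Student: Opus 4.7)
The plan is to combine Theorem \ref{gammaprod} (the Weierstrass-type product for $1/\Gamma(s)$) with Theorem \ref{eulersin} (the sine product) by computing $1/\Gamma(s)\Gamma(1-s)$ directly. The key observation is that the product form of $1/\Gamma(s)$ contains factors $(1+s/n)$ while the sine product contains factors $(1-s^2/n^2)$, so if we pair $1/\Gamma(s)$ with a companion $1/\Gamma(-s)$-type object, the exponentials $e^{\gamma s}$ and $e^{-s/n}$ should cancel with their counterparts, leaving exactly the sine product.

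First I would apply Theorem \ref{gammaprod} twice, once at argument $s$ and once at argument $-s$, to obtain
$$\frac{1}{\Gamma(s)} = s e^{\gamma s}\prod_{n=1}^\infty\left(1+\frac{s}{n}\right)e^{-s/n}, \qquad \frac{1}{\Gamma(-s)} = -s e^{-\gamma s}\prod_{n=1}^\infty\left(1-\frac{s}{n}\right)e^{s/n}.$$
To convert $\Gamma(-s)$ into $\Gamma(1-s)$, I would invoke Corollary \ref{sgammas} in the form $\Gamma(1-s) = -s\,\Gamma(-s)$, which gives $1/\Gamma(1-s) = -(1/s)\cdot 1/\Gamma(-s)$. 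Substituting the product expression for $1/\Gamma(-s)$ and simplifying, the factor $-s$ cancels with $-1/s$, yielding
$$\frac{1}{\Gamma(1-s)} = e^{-\gamma s}\prod_{n=1}^\infty\left(1-\frac{s}{n}\right)e^{s/n}.$$

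Next I would multiply the two expressions together. The constants $e^{\gamma s}$ and $e^{-\gamma s}$ cancel, and within each factor of the product the exponentials $e^{-s/n}$ and $e^{s/n}$ also cancel, leaving only the algebraic part:
$$\frac{1}{\Gamma(s)\Gamma(1-s)} = s\prod_{n=1}^\infty\left(1+\frac{s}{n}\right)\left(1-\frac{s}{n}\right) = s\prod_{n=1}^\infty\left(1-\frac{s^2}{n^2}\right).$$
Applying Theorem \ref{eulersin} to the right hand side immediately gives $\sin(\pi s)/\pi$, completing the proof.

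The main obstacle is a bookkeeping one rather than a conceptual one: ensuring that the exponential correction terms truly cancel in pairs, and that the switch from $\Gamma(-s)$ to $\Gamma(1-s)$ is handled correctly (the sign flip from $-s$ is crucial so that no spurious minus sign survives). Convergence of the resulting product is guaranteed because the factors $(1-s^2/n^2)$ differ from $1$ by $O(n^{-2})$, so Theorem \ref{prodcon} applies without needing the exponential regulators once they have cancelled. Everything else is routine once the two product formulas are placed side by side.
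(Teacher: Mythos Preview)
Your proposal is correct and follows essentially the same route as the paper: apply Theorem \ref{gammaprod} at $s$ and at $-s$, let the exponential factors cancel pairwise, use Corollary \ref{sgammas} in the form $\Gamma(1-s)=-s\,\Gamma(-s)$ to pass from $\Gamma(-s)$ to $\Gamma(1-s)$, and then identify the resulting product $s\prod_{n\ge 1}(1-s^2/n^2)$ with $\sin(\pi s)/\pi$ via Theorem \ref{eulersin}. The only cosmetic difference is ordering: the paper multiplies first to obtain $1/(\Gamma(s)\Gamma(-s))=-s^2\prod(1-s^2/n^2)$ and then applies the functional equation, whereas you apply the functional equation to $1/\Gamma(-s)$ before multiplying.
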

\begin{proof}
We can use Theorem \ref{gammaprod} to see that
$$\frac{1}{\Gamma(s)\Gamma(-s)}=-s^2e^{\gamma s - \gamma s}\prod_{n=1}^\infty e^{s/n-s/n}\left(\frac{n+s}{n}\right)\left(\frac{n-s}{n}\right)=-s^2\prod_{n=1}^\infty\frac{n^2-s^2}{n^2}.$$
We can then use Corollary \ref{sgammas} to show that, as $\Gamma(1-s)=-s\Gamma(-s)$,
$$\frac{1}{\Gamma(s)\Gamma(1-s)}=s\prod_{n=1}^\infty\frac{n^2-s^2}{n^2}.$$
Comparing this to Theorem \ref{eulersin} then completes the proof as required.
\end{proof}
\begin{cor} \label{eulerreflection2}
The Gamma function also has the reflectional formula
$$\frac{1}{\Gamma(s)\Gamma(-s)}=-\frac{s \sin(\pi s)}{\pi}.$$
\end{cor}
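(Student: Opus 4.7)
The plan is to deduce this corollary directly from Theorem \ref{eulerreflection} using the recursive property of the Gamma function established in Corollary \ref{sgammas}, rather than redoing any infinite product manipulations. The key observation is that $\Gamma(1-s)$ and $\Gamma(-s)$ differ by a very clean factor, so the two reflection formulae should be essentially equivalent after a one-line algebraic rearrangement.

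First, I would apply the functional equation $\Gamma(z+1)=z\Gamma(z)$ with the substitution $z=-s$. This yields $\Gamma(1-s)=(-s)\Gamma(-s)=-s\,\Gamma(-s)$, which is the only identity I really need from the earlier material beyond the reflection formula itself.

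Next, I would substitute this into Theorem \ref{eulerreflection}. Since $\Gamma(s)\Gamma(1-s)=-s\,\Gamma(s)\Gamma(-s)$, we get
$$\frac{\sin(\pi s)}{\pi}=\frac{1}{\Gamma(s)\Gamma(1-s)}=\frac{1}{-s\,\Gamma(s)\Gamma(-s)},$$
and multiplying both sides by $-s$ and then taking reciprocals (or simply isolating the desired quantity) gives
$$\frac{1}{\Gamma(s)\Gamma(-s)}=-\frac{s\sin(\pi s)}{\pi},$$
which is exactly the claim. An alternative, perhaps more satisfying route would be to observe that the proof of Theorem \ref{eulerreflection} already passed through the intermediate identity $\frac{1}{\Gamma(s)\Gamma(-s)}=-s^{2}\prod_{n=1}^{\infty}\frac{n^{2}-s^{2}}{n^{2}}$, and then directly compare this to the product expansion of $\sin(\pi s)$ in Theorem \ref{eulersin}; this gives the same conclusion without reinvoking the full reflection formula.

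There is essentially no obstacle here: both steps are purely formal, and the only care needed is a sign check, since it is easy to drop the minus sign when substituting $\Gamma(1-s)=-s\,\Gamma(-s)$. I would explicitly note the signs at each stage and flag that the identity, as stated, is valid wherever both $\Gamma(s)$ and $\Gamma(-s)$ make sense (in particular, away from $s\in\mathbb{Z}$, although the reciprocal form $1/[\Gamma(s)\Gamma(-s)]$ extends naturally to all of $\mathbb{C}$ since $1/\Gamma$ is entire).
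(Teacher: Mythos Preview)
Your proposal is correct and matches the paper's own proof, which simply remarks that the corollary follows either by a slight variation of the infinite-product argument or by combining Theorem~\ref{eulerreflection} with Corollary~\ref{sgammas}. You have carried out the second route in full detail (and also noted the first as an alternative), so there is nothing to add.
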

\begin{proof}
This can easily be shown using a slight variation of the previous proof. However, an alternate proof can be constructed by considering Theorem \ref{eulerreflection} and Corollary \ref{sgammas}.
\end{proof}
\newpage

\end{document}